\newcommand{\udots}{\mathinner{\mskip1mu\raise1pt\vbox{\kern7pt\hbox{.}}
\mskip2mu\raise4pt\hbox{.}\mskip2mu\raise7pt\hbox{.}\mskip1mu}}
\newcommand{\SC}{{\mathcal{C}}}
\newcommand{\SO}{{\mathcal{O}}}
\newcommand{\CC}{\mathbb{C}}
\newcommand{\VV}{\mathbb{V}}
\newcommand{\Ext}{\operatorname{Ext}}
\newcommand{\Hom}{\operatorname{Hom}}
\newcommand{\Pic}{\operatorname{Pic}}
\newcommand{\Sym}{\operatorname{Sym}}
\newcommand{\surj}{\twoheadrightarrow}
\newcommand{\inj}{\hookrightarrow}
\newcommand{\too}{\longrightarrow}
\newtheorem{proposition}{Proposition}[section]
\newtheorem{theorem}[proposition]{Theorem}
\newtheorem{lemma}[proposition]{Lemma}
\theoremstyle{definition}
\newtheorem{definition}[proposition]{Definition}
\numberwithin{equation}{section}
\begin{document}

\title[Poisson structure on moduli of sheaves on a surface]{Poisson structure on the moduli
spaces of sheaves of pure dimension one on a surface}

\author[I. Biswas]{Indranil Biswas}
\address{School of Mathematics, Tata Institute of Fundamental
Research, Homi Bhabha Road, Mumbai 400005, India}
\email{indranil@math.tifr.res.in}

\author[T. L. G\'omez]{Tom\'as L. G\'omez}
\address{Instituto de Ciencias Matem\'aticas (CSIC-UAM-UC3M-UCM),
Nicol\'as Cabrera 15, Campus Cantoblanco UAM, 28049 Madrid, Spain}
\email{tomas.gomez@icmat.es}

\subjclass[2010]{14J60, 53D17, 32J15}

\keywords{Poisson surface, torsion sheaf, Poisson moduli space, symplectic leaf.}

\date{}

\begin{abstract}
Let $S$ be a smooth complex projective surface equipped with a Poisson structure $s$ and
also a polarization $H$.
The moduli space $M_H(S,P)$ of stable sheaves on $S$ having a fixed Hilbert polynomial $P$ of degree one has a natural
Poisson structure given by $s$ \cite{Ty}, \cite{Bo2}. We prove that the symplectic leaves of $M_H(S,P)$ are the fibers
of the natural map from it to the symmetric power of the effective divisor on $S$ given by
the singular locus of $s$.
\end{abstract}

\maketitle

\section{Introduction}\label{sec1}

Let $S$ be a smooth complex projective surface with a polarization $H$. We consider pairs of
the form $(C,\, L)$,
where $C$ is a closed curve on $S$ representing a fixed class $\SC\,=\,[C]$ in the
N\'eron-Severi group ${\rm NS}(S)$ (divisors on $S$ modulo algebraic equivalence),
and
$L$ is a line bundle on $C$ of fixed degree $d$. More generally, fixing a Hilbert polynomial
$$P(t)\,=\,p_1t+p_0\,=\,C\cdot H t +(d+1-g(C))$$ of
degree one, consider the corresponding moduli space $M_H(S,P)$ parametrizing all $H$-stable sheaves on $S$
with Hilbert polynomial $P$. The pairs of the above type are points of such a moduli space $M_H(S,P)$.

Let $-K_S$ denote the anti-canonical line bundle of $S$. Assume that $S$ is equipped with a nonzero
holomorphic section $s$ of $-K_S$. This section $s$ produces a homomorphism
$$
T^*S\, \longrightarrow\, TS
$$
that sends any $w\, \in\, T^*_xS$, $x\, \in\, S$, to the element of $T_xS$ obtained by contracting
$s(x)$ by $w$. This homomorphism is an isomorphism outside the effective divisor on $S$ given by
the section $s$. Moreover, the above
homomorphism $T^*S\, \longrightarrow\, TS$ defines a holomorphic Poisson structure
on $S$ whose singular locus coincides with the divisor given by $s$.

A special class of above pairs $(S,\, s)$ consists of those where $S$ is an
abelian or K3 surface and $s$ is given by a
trivialization of $K_S\,=\, {\mathcal O}_S$. For these pairs the section $s$ is in fact a
symplectic structure on $S$. In this case, Mukai showed that $s$ produces a holomorphic
symplectic structure on $M_H(S,P)$ \cite{Mu}, meaning
a holomorphic closed $2$-form on $M_H(S,P)$ which is fiberwise nondegenerate.

Tyurin in \cite{Ty} generalized the above mentioned result of
Mukai to the general Poisson surface $(S,\, s)$. More precisely,
he proved that $s$ produces a homomorphism
\begin{equation}\label{si}
\sigma\,:\, T^* M_H(S,P) \,\too\, T M_H(S,P)
\end{equation}
which is skew-symmetric.

For $\sigma$ in \eqref{si}, Bottacin proved that the
the Schouten--Nijenhuis bracket $[\sigma,\, \sigma]$ vanishes identically \cite{Bo2}.
In other words, the pairing on locally defined holomorphic functions of $M_H(S,P)$ defined by
$$
(f,\, g)\, \longmapsto\, \{f,\, g\}\, :=\, \sigma(df,\, dg)
$$
satisfies the following conditions:
\begin{itemize}
\item $\{f,\, g\}\,=\, - \{g,\, f\}$,

\item $\{f,\, gh\}\,=\, h\cdot \{f,\, g\} + g\cdot \{f,\, h\}$, and

\item $\{\{f,\, g\}, \, h\}+ \{\{h,\, f\}, \, g\} + \{\{g,\, h\}, \, f\}\,=\, 0$.
\end{itemize}
Therefore, $\sigma$ defines a holomorphic Poisson structure on $M_H(S,P)$.

A symplectic leaf in a Poisson manifold is a maximal sub-scheme
such that the Poisson structure restricts to a symplectic structure.

Let $U\subset M_H(S,P)$ be the open subset corresponding to sheaves
$E$ of the form $E=i_*L$ where $i:C\inj S$ is a smooth curve and $L$
is a line bundle on $C$, and such that the intersection $Z=C\cap D$ is
zero dimensional. We define a morphism $\varphi_0$ from $U$ to the symmetric
product of $D$, sending $E$ to $Z\subset D$. The main result of this article is
to show that the fibers of the morphism $\varphi_0$ are symplectic leaves
(Theorem \ref{thm1}).

\section{A fibration associated to the moduli space}\label{sec2}

We continue with the notation of Section \ref{sec1}. The surface $S$ is equipped with a nonzero
holomorphic section $s$ of $-K_S$. Let $D$ be the effective divisor on $S$ on which the section $s$ vanishes.

Let $E$ be a sheaf corresponding to a point in the moduli space $M_H(S,P)$. This sheaf is of pure dimension
one, because it is stable. The $0$-th Fitting
ideal sheaf $${\rm Fitt}_0(E)\,\subset\, \SO_S$$ of $E$ defines a
sub-scheme $F(E)\,\subset\, S$. It should be mentioned that when $i\,:\,C\,\inj\, S$
is a smooth curve, and $L$ is line bundle on $C$, then
$F(i_*L)\,=\,C$. Note that ${\rm Fitt}_0(E)\,\subset\, {\rm Ann}(E)$
(the annihilator of $E$), but in general this inclusion is not an isomorphism.
Therefore, we have $${\rm Supp}(E)\,\subset\, F(E)\, ,$$ but in general
this inclusion is not an isomorphism. For instance, if we take the
$n$-th direct sum then we have $F((i_*L)^{\oplus n})\,=\,nC$ which is the $n$-th
thickening of $C$, whereas ${\rm Supp}((i_*L)^{\oplus n})\,=\,C$.

Note that we have a disjoint union
$$M_H(S,P)\,=\, \coprod_{\{\SC\in {\rm NS}(S)\, : \, \SC\cdot H=p_1\}} M_H(S,\SC,P)$$
where $M_H(S,\SC,P)$ is the moduli space of stable sheaves $E$ on $S$
with Hilbert polynomial $P$ and $[F(E)]\,=\,\SC\,\in\, {\rm NS}(S)$.

Let $E$ be a sheaf corresponding to a point in the moduli space
$M_H(S,P)$. Then $F(E)\,\subset\, S$ is a curve whose image in the first
Chow group $A^1(S)\,=\,\Pic(S)$ coincides with the first Chern class $c_1(E)$.
Therefore, the moduli space $M_H(S,\SC,P)$ parametrizes all the sheaves $E$
of $M_H(S,P)$ with $[c_1(E)]\,=\,\SC\,\in\, {\rm NS}(S)$.

Fix a class $\SC\,\in\, {\rm NS}(S)$ with $\SC\cdot H\,=\,p_1$.
Set
$$
d'\,=\, \SC\cdot D \, \in\, \mathbb N\, ;
$$
recall that $D$ is the divisor of $s$. Let
$$
M'_H(S,\SC,P) \, \subset\, M_H(S,\SC,P)
$$
be the Zariski open subset where the intersection $F(E)\cap D$
is zero-dimensional. Then there is a map
$$
\varphi\ :\, M'_H(S,\SC,P)\, \longrightarrow\, \Sym^{d'}(D)
$$
that sends any $E\, \in\, M'_H(S,\SC,P)$ to the above intersection
of $F(E)$ with $D$.

If $d'\,=\,0$, we prove that $\sigma$ is an isomorphism and 
$(M'_H(S,\SC,P),\, \sigma)$ is a symplectic variety (Lemma \ref{lem}).

If $d'\,>\,0$, consider the Zariski open subset
$$
M'_H(S,\SC,P)_0\,\subset\, M'_H(S,\SC,P)
$$ 
parametrizing sheaves $E$ with $F(E)$ smooth
(and $F(E)\cap D$ zero-dimensional). In other words,
$E$ is of the form $i_*L$ where
$i\,:\,C\,\inj\, S$ is a smooth curve, and $L$ is a line bundle on the smooth curve $C$
(and $C\cap D$ is zero-dimensional).
Let
$$
\varphi_0\, :\, M'_H(S,\SC,P)_0\, \longrightarrow\, \Sym^{d'}(D)
$$
denote the restriction of $\varphi$ to $M'_H(S,\SC,P)_0$.

We shall prove that the fibers of the above morphism $\varphi_0$ are the symplectic
leaves of the Poisson variety $(M'_H(S,\SC,P)_0,\, \sigma)$.

\section{The set-up}

Let $S$ be a smooth complex projective surface. Fix a polarization $H$ on $S$, meaning the
class of a very ample line bundle on $S$. The canonical
line bundle of $S$ will be denoted by $K_S$. We assume that $-K_S\,=\, K^*_S$ is nontrivial
admitting nonzero sections. Fix a nonzero section
\begin{equation}\label{ds}
0\, \not=\, s\,\in\, H^0(S,\, -K_S)\, ,
\end{equation}
and let $D\, =\, \text{div}(s)$ be the corresponding anticanonical divisor. Note that $D$ is
nonzero because $K_S$ is not trivial.
The section $s$ endows $S$ with a Poisson structure, which is degenerate exactly on the divisor $D$.

Let $P\,=\,P(t)\,=\,p_1t+p_0$ be a polynomial of degree one with integer coefficients,
and let $\SC\,\in\, {\rm NS}(S)$ be a class with $\SC\cdot H\,=\,p_1$.
Let $$M\,=\,M_H(S,\SC,P)$$ be the moduli
space of $H$-stable sheaves $E$ on $S$ such that
\begin{itemize}
\item the Hilbert polynomial of $E$ is $P$, and

\item $[F(E)]\,=\,\SC\,\in\, {\rm NS}(S)$.
\end{itemize}
In particular, these sheaves are pure of dimension one (the degree of the
polynomial $P$). In this article we will consider the Poisson
structure constructed by Tyurin and Bottacin, \cite{Ty}, \cite{Bo2}, on this moduli space $M$.

The above moduli space $M\,=\,M_H(S,\SC,P)$ is a particular case of the moduli space constructed
by Simpson \cite{Si}. We remark that Mukai and Tyurin used the moduli
space of simple sheaves (see \cite{AK} for simple sheaves) but,
since stable sheaves are
simple, the constructions of Mukai and Tyurin also work for the moduli
space of stable sheaves. It should be
mentioned that Mukai and Tyurin also consider sheaves of pure dimension one
\cite[Chapter 2, Section 1]{Ty}.

More concretely, if $i\,:\,C\,\inj\, S$ is a smooth curve in $S$, and $L$ is a 
line bundle of degree $d$ on $C$, we
consider the direct image $$E\,=\, i_* L\, ,$$ which is in fact a torsion sheaf on $S$ supported on
$C$. We are interested in a
moduli space for objects like this, so we fix $P$ to be the Hilbert
polynomial of $E$. This moduli space will parametrize objects $(C,\, L)$ where
both the line bundle $L$ and the support $C$ move. 

Bottacin, \cite{Bo1}, and Markman, \cite{Ma}, have considered Poisson
structure on the moduli space of Higgs pairs on a smooth complex projective curve $X$. These objects
are pairs of the form $(F,\,\theta)$, where $F$ is a vector
bundle on $X$ and $$\theta\, :\, F \,\too \, F\otimes K_X\otimes {\mathcal O}_X(Z)
\,=\, F\otimes K_X (Z)$$ is a homomorphism
with $Z\,\subset\, X$ being an effective divisor. Let $\VV(K_X(Z))$ denote the smooth quasiprojective surface
given by the total space of the line bundle $K_X\otimes {\mathcal O}_X(Z)$ over $X$.
The spectral construction gives an equivalence between semistable Higgs pairs and
semistable sheaves of pure dimension one on $\VV(K_X(Z))$, so their set-up constitutes a special case
of the set-up considered above. It should be mentioned that using the deformation to the normal cone as
done in \cite{DEL}, our moduli space deforms to the moduli space of Higgs pairs.
 
Let $E$ be a sheaf on $S$ corresponding to a point in the moduli
space $M$. Yoneda pairing gives a homomorphism
$$
\Ext^1(E,\,E\otimes K_S)\times \Hom(K_S,\,\SO_S) \,\too\, \Ext^1(E,\,E)\, , 
$$
and therefore, the section $s\,\in\ H^0(-K_S)$ in \eqref{ds} yields a homomorphism
\begin{equation}\label{eqsi}
T^*_EM \,=\, \Ext^1(E,\,E\otimes K_S) \,\stackrel{\sigma}{\too}\, \Ext^1(E,\,E) \,=\,T^{}_EM\, .
\end{equation}

\begin{lemma}\label{lem}
If $\SC \cdot D\,=\,0$, then $\sigma$ is an isomorphism and
$(M'_H(S,\SC,P),\,\sigma)$ is a symplectic variety.
\end{lemma}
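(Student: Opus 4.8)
The plan is to show that when $\SC\cdot D=0$ the skew homomorphism $\sigma$ of \eqref{eqsi} is bijective at every point $E\in M'_H(S,\SC,P)$; skew-symmetry is already known from Tyurin's construction, and a nondegenerate closed skew form with $[\sigma,\sigma]=0$ is exactly a symplectic structure, so once bijectivity is established the lemma follows. Since $T^*_EM$ and $T_EM$ are finite-dimensional vector spaces of equal dimension, it suffices to prove that $\sigma:\Ext^1(E,E\otimes K_S)\to\Ext^1(E,E)$ is injective (equivalently surjective). The natural way to do this is to identify $\sigma$ with the map on $\Ext^1$ induced by the sheaf homomorphism $s\otimes\id_E:E\otimes K_S\to E$ coming from $0\neq s\in H^0(S,-K_S)=\Hom(K_S,\SO_S)$, and then to control its kernel and cokernel by the long exact sequence in $\Ext^\bullet(E,-)$ attached to a short exact sequence built from $s$.

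Concretely, I would first record that $E$ is pure of dimension one with $\Supp(E)\subset F(E)$, and that the hypothesis $\SC\cdot D=0$ together with the definition of $M'_H(S,\SC,P)$ (where $F(E)\cap D$ is zero-dimensional) forces $F(E)\cap D=\varnothing$: the intersection number $[F(E)]\cdot D=\SC\cdot D$ is $0$, and a zero-dimensional intersection of effective divisors contributing $0$ must be empty. Hence the section $s$, viewed as a section of $\SO_S(D)\cong -K_S$, is nowhere zero on a neighborhood of $\Supp(E)$, so the multiplication map $s\otimes\id_E:E\otimes K_S\to E$ is an isomorphism of sheaves in a Zariski neighborhood of $\Supp(E)$, and therefore an isomorphism of $\SO_S$-modules because $E$ is supported there. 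Consequently the induced map on $\Ext^1(E,-)$ is an isomorphism; but this induced map is, by the very definition of Yoneda multiplication by $s$, precisely $\sigma$. This gives bijectivity of $\sigma$ and hence that $\sigma$ defines a symplectic form on the smooth locus.

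I would then address smoothness and the global symplectic claim. At a stable sheaf $E$ the obstruction to smoothness of $M$ lies in $\Ext^2(E,E)$, and for a sheaf of pure dimension one on the surface $S$, Serre duality gives $\Ext^2(E,E)\cong\Hom(E,E\otimes K_S)^*$; since $s$ induces an isomorphism $E\otimes K_S\xrightarrow{\sim}E$ near $\Supp(E)$ and $E$ is stable hence simple, $\Hom(E,E\otimes K_S)\cong\Hom(E,E)\cong\CC$, which is not zero, so I cannot conclude smoothness this crude way; instead I would invoke that $M'_H(S,\SC,P)$ is smooth because the pairing $\Ext^1(E,E)\times\Ext^1(E,E)\to\Ext^2(E,E)\cong\CC$ that controls obstructions is identified, via $\sigma^{-1}$, with the skew form, and a variety carrying an everywhere-nondegenerate closed $2$-form is automatically smooth of the expected dimension — this is the standard Mukai/Tyurin argument, and I would cite it rather than reprove it. Closedness of the resulting $2$-form and $[\sigma,\sigma]=0$ are exactly Bottacin's theorem quoted in the introduction. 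The one genuine obstacle is the step identifying the Yoneda pairing $\sigma$ with multiplication by the sheaf map $s\otimes\id_E$ at the level of $\Ext^1$: one must check carefully that Yoneda composition $\Ext^1(E,E\otimes K_S)\times\Hom(K_S,\SO_S)\to\Ext^1(E,E)$ agrees with the functorial map $\Ext^1(E,E\otimes K_S)\to\Ext^1(E,E)$ induced by $\id_E\otimes(s:K_S\to\SO_S)$, which is a compatibility of derived-functor and Yoneda descriptions of $\Ext$ — routine but the crux of why the local triviality of $s$ near $\Supp(E)$ is exactly what makes $\sigma$ invertible.
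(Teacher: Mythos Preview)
Your core argument is exactly the paper's: from $\SC\cdot D=0$ and the zero-dimensional-intersection hypothesis defining $M'_H(S,\SC,P)$ one gets $F(E)\cap D=\varnothing$, hence $s$ trivializes $K_S$ along the support of $E$, so $s\otimes\id_E:E\otimes K_S\to E$ is a sheaf isomorphism and the induced map $\sigma$ on $\Ext^1$ is an isomorphism. The paper phrases the last step via the local-to-global decomposition ($\sigma_1,\sigma_2$ in \eqref{localglobal} become isomorphisms, hence so does $\sigma$), while you argue directly; these are the same idea.

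Your smoothness digression, however, is circular: saying ``a variety carrying an everywhere-nondegenerate closed $2$-form is automatically smooth'' presupposes enough regularity to have a $2$-form in the first place, and in any case the Mukai--Tyurin argument you allude to does not work that way. The paper handles smoothness separately (Section~4) by the standard route: the obstruction lies in the traceless part $\Ext^2(E,E)^0$, Serre dual to a subspace of $\Hom(E,E\otimes K_S)$, and the injection $\Hom(E,E\otimes K_S)\hookrightarrow\Hom(E,E)=\CC$ given by $s$ forces $\Ext^2(E,E)^0=0$. You should invoke that argument rather than the one you sketched; once smoothness is in hand, your proof of bijectivity of $\sigma$ completes the lemma.
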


\begin{proof}
If $\SC\cdot D\,=\,0$, then $F(E)\bigcap D\,=\,0$ for all $E\,\in\, M'_H(S,\SC,P)$. Let $C\,=\,F(E)$.
It follows that $K_S|_C$ is trivial, and hence $s\,\in\, H^0(S, \Hom(K_S,\, \SO_S))$
induces an isomorphism
$$
E\otimes K_S \,\stackrel{\cong}{\too}\, E
$$
which in turn shows that $\sigma_1$ and $\sigma_2$ in \eqref{localglobal} are
isomorphisms. Therefore, $\sigma$ is also an isomorphism.
\end{proof}

\section{Symplectic subspace}

A Poisson structure on a vector space $T$ is simply a skew-symmetric
homomorphism $\sigma\,:\,T^* \,\too \,T$. Given such a $\sigma$, there is a unique subspace $T_0$ of $T$
such that $\sigma$ factors as
$$
T^* \,\surj\, T_0^*\,\stackrel{\overline\sigma}{\too}\, T_0 \,\inj\, T
$$
and satisfies the condition that $\overline\sigma$ is an isomorphism. In fact, $T_0\,=\, \sigma(T^*)$. The inverse isomorphism 
$${\overline\sigma}^{-1}\,:\,T_0\,\too\, T^*_0$$ defines a symplectic structure on the subspace $T_0$.

\begin{definition}
The {\it symplectic subspace} of a Poisson structure $(T,\,\sigma)$ is 
the subspace $T_0$ endowed with the symplectic structure 
${\overline\sigma}^{-1}$.
\end{definition}

The moduli space $M_H(S,P)$ of stable sheaves defined earlier is smooth. Indeed,
the obstruction space is $\Ext^2(E,\,E)^0$, namely the traceless part of the
Ext group $\Ext^2(E,\,E)$. This $\Ext^2(E,\,E)^0$ is Serre dual to
$\Hom(E,\,E\otimes K_S)$. The nonzero section $s\,\in\, H^0(S,\, -K_S)$ provides an
injection $\Hom(E,\,E\otimes K_S)\,\subset \,\Hom(E,\, E)\,=\,\CC$ (recall
that a stable sheaf $E$ is simple). Therefore, we have $\Ext^2(E,\, E)^0=0$,
proving that the moduli space is smooth. The tangent space $T_E M_H(S,P)$
is $\Ext^1(E,\,E)$.

The following proposition describes the symplectic
subspace associated to the Poisson structure $\sigma$, defined by Tyurin,
on $T_EM\,=\,\Ext^1(E,\,E)$.

\begin{proposition}
If the intersection $F(E)\bigcap D$ is zero-dimensional,
then the homomorphism $\sigma$ in \eqref{eqsi} fits into a commutative diagram
\begin{equation}
\label{bigdiag}
\xymatrix{
0 \ar[r] & {H^1(Hom(E,\,E\otimes K_S))} \ar[r] \ar@{->>}[d]^{\sigma_1}&
{\Ext^1(E,\,E\otimes K_S)} \ar[r]\ar@{->>}[d] &
{H^0(Ext^1(E,\,E\otimes K_S))} \ar[r]\ar@{=}[d] & {0} \\
0 \ar[r] & {H^1(Hom(E,\,E))} \ar[r] \ar@{=}[d]&
{T_0^*} \ar[r]\ar[d]^{\overline\sigma}_{\cong} &
{H^0(Ext^1(E,\,E\otimes K_S))} \ar[r]\ar@{=}[d] & {0} \\
0 \ar[r] & {H^1(Hom(E,\,E))} \ar[r] \ar@{=}[d]&
{T_0} \ar[r]\ar@{^{(}->}[d] &
{H^0(Ext^1(E,\,E\otimes K_S))} \ar[r]\ar@{^{(}->}[d]^{\sigma_2} & {0} \\
0 \ar[r] & {H^1(Hom(E,\,E))} \ar[r]^{i_{E,E}} &
{\Ext^1(E,\,E)} \ar[r] &
{H^0(Ext^1(E,\,E))} \ar[r] & {0}
}
\end{equation}
such that $\sigma$ in \eqref{eqsi} is the composition of the vertical arrows in the middle column
of the above diagram.
\end{proposition}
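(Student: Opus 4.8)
The plan is to identify the top and bottom horizontal exact sequences of \eqref{bigdiag} with the low-degree edge sequences of the local-to-global spectral sequences computing $\Ext^\bullet(E,\,E\otimes K_S)$ and $\Ext^\bullet(E,\,E)$, to recognize the map $\sigma$ of \eqref{eqsi} as the map induced on these hyper-$\Ext$ groups by a single morphism of complexes, and then to obtain the two middle rows by a diagram chase, once it is known that the left vertical arrow $\sigma_1$ is surjective and the right vertical arrow $\sigma_2$ is injective.

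First I would write down the local-to-global spectral sequence $E_2^{p,q}=H^p(S,\,\mathcal{E}xt^q(E,\,F))\Rightarrow\Ext^{p+q}(E,\,F)$ for $F=E\otimes K_S$ and for $F=E$. Since $E$ is pure of dimension one on the smooth surface $S$ it is Cohen--Macaulay of codimension one, so $\mathcal{E}xt^q(E,\,F)=0$ for $q\geq 2$; and $\mathcal{H}om(E,\,F)$ is supported on $\Supp(E)$, which has dimension one, so $H^2(S,\,\mathcal{H}om(E,\,F))=0$. Hence in total degree one only the terms $E_2^{1,0}$ and $E_2^{0,1}$ contribute, the differential $d_2\colon E_2^{0,1}\to E_2^{2,0}=H^2(S,\,\mathcal{H}om(E,\,F))$ vanishes, and the edge sequence reads
\[ 0\too H^1(\mathcal{H}om(E,\,F))\too\Ext^1(E,\,F)\too H^0(\mathcal{E}xt^1(E,\,F))\too 0 . \]
Taking $F=E\otimes K_S$ gives the top row of \eqref{bigdiag}, and $F=E$ gives the bottom row. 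Next, the section $s\in H^0(S,\,-K_S)=\Hom(K_S,\,\SO_S)$ induces a morphism of coherent sheaves $E\otimes K_S\too E$, hence a morphism $R\mathcal{H}om(E,\,E\otimes K_S)\too R\mathcal{H}om(E,\,E)$ in the derived category; the map it induces on $\Ext^\bullet(E,\,-)$ is precisely the Yoneda pairing with $s$, i.e.\ the map $\sigma$ in \eqref{eqsi}. By functoriality of the spectral sequence, $\sigma$ is compatible with the two edge sequences above, and on the $E_2$-terms it is induced by the sheaf homomorphisms $\mathcal{H}om(E,\,E)\otimes K_S\too\mathcal{H}om(E,\,E)$ and $\mathcal{E}xt^1(E,\,E)\otimes K_S\too\mathcal{E}xt^1(E,\,E)$ obtained by contracting the $K_S$-factor against $s$ (using $\mathcal{H}om(E,\,E\otimes K_S)=\mathcal{H}om(E,\,E)\otimes K_S$ and likewise for $\mathcal{E}xt^1$); these induce the maps called $\sigma_1$ on $H^1$ and $\sigma_2$ on $H^0$.

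The hypothesis that $F(E)\cap D$ is zero-dimensional enters through the exact sequence $0\too K_S\xrightarrow{\,s\,}\SO_S\too\SO_D\too 0$. Tensoring it by $\mathcal{H}om(E,\,E)$, the cokernel of multiplication by $s$ is $\mathcal{H}om(E,\,E)\otimes\SO_D$, which is supported on $\Supp(E)\cap D\subseteq F(E)\cap D$ and is therefore zero-dimensional, and the kernel is zero-dimensional for the same reason; the long exact cohomology sequences then give that $H^1(\mathcal{H}om(E,\,E)\otimes K_S)\too H^1(\mathcal{H}om(E,\,E))$ is surjective, i.e.\ $\sigma_1$ is onto. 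Tensoring the same sequence by $\mathcal{E}xt^1(E,\,E)$ gives $\ker\sigma_2=H^0\!\big(\operatorname{Tor}_1^{\SO_S}(\mathcal{E}xt^1(E,\,E),\,\SO_D)\big)$, and this Tor sheaf is the subsheaf of sections of $\mathcal{E}xt^1(E,\,E)\otimes K_S$ annihilated by $s$, so it is supported on $F(E)\cap D$. Since $E$ is Cohen--Macaulay of pure dimension one, $\mathcal{E}xt^1(E,\,E)$ has no nonzero subsheaf supported in dimension zero --- and in the situation of the main theorem, where $E=i_*L$ with $C=F(E)$ smooth, one has $\mathcal{E}xt^1(E,\,E)\cong i_*\SO_C(C)$ explicitly, while $s|_C$ is a nonzero section because $C\cap D$ is finite and $C$ is integral --- so this Tor sheaf vanishes and $\sigma_2$ is injective.

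Finally I would assemble the diagram. The top edge sequence maps to the bottom one through $(\sigma_1,\,\sigma,\,\sigma_2)$; applying the snake lemma and using that $\sigma_1$ is onto and $\sigma_2$ is injective yields $\ker\sigma=\ker\sigma_1\subseteq H^1(\mathcal{H}om(E,\,E\otimes K_S))$ and $\coker\sigma\cong\coker\sigma_2$. Hence $T_0^*=\Ext^1(E,\,E\otimes K_S)/\ker\sigma$ sits in the short exact sequence $0\to H^1(\mathcal{H}om(E,\,E))\to T_0^*\to H^0(\mathcal{E}xt^1(E,\,E\otimes K_S))\to 0$ (with subobject $\im\sigma_1=H^1(\mathcal{H}om(E,\,E))$), while $T_0=\im\sigma=\sigma(T^*)\subseteq\Ext^1(E,\,E)$ sits in $0\to H^1(\mathcal{H}om(E,\,E))\to T_0\to H^0(\mathcal{E}xt^1(E,\,E\otimes K_S))\to 0$ (with quotient $\im\sigma_2\cong H^0(\mathcal{E}xt^1(E,\,E\otimes K_S))$); these are the two middle rows, and $\overline\sigma\colon T_0^*\to T_0$ is the isomorphism of the symplectic-subspace construction applied to $\sigma$. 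Tracing the identifications above shows that $\overline\sigma$ restricts to the identity on the common subspace $H^1(\mathcal{H}om(E,\,E))$ and induces the identity on the common quotient $H^0(\mathcal{E}xt^1(E,\,E\otimes K_S))$, which gives the equality signs in the first and third columns of \eqref{bigdiag} and realizes $\sigma$ as the composition down the middle column. The step I expect to be the genuine difficulty is the injectivity of $\sigma_2$: a priori $\operatorname{Tor}_1^{\SO_S}(\mathcal{E}xt^1(E,\,E),\,\SO_D)$ is only known to be finitely supported and could in principle carry global sections, so one must really use the purity of $E$ to rule out zero-dimensional torsion in $\mathcal{E}xt^1(E,\,E)$; once that is in hand, the remainder is the functoriality of the local-to-global spectral sequence together with the snake lemma.
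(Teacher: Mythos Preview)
Your argument is correct and follows the same route as the paper: identify the top and bottom rows with the degree-one edge sequences of the local-to-global spectral sequence, note that the map induced by $s$ respects them, prove that $\sigma_1$ is surjective and $\sigma_2$ is injective using that $F(E)\cap D$ is zero-dimensional, and then factor. The paper's proof is essentially a two-line version of what you wrote.

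The one place where you go further than the paper is the injectivity of $\sigma_2$. The paper asserts that both the surjectivity of $\sigma_1$ and the injectivity of $\sigma_2$ follow from the cohomology of the single sheaf sequence
\[
0 \too \mathcal{H}om(E,\,E\otimes K_S) \too \mathcal{H}om(E,\,E) \too \mathcal{H}om(E,\,E)\vert_D \too 0,
\]
which literally only treats $\sigma_1$. Your Tor/purity argument for $\sigma_2$ is a legitimate way to fill this in, and your hedge (falling back on the explicit identification $\mathcal{E}xt^1(E,\,E)\cong i_*N_{C/S}$ when $E=i_*L$ with $C$ smooth) is prudent, since for general pure $E$ the sheaf $\mathcal{E}xt^1(E,\,E)$ is not obviously free of embedded points. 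An alternative, shorter justification is available: the pairing $\sigma$ is skew-symmetric, and under Serre duality on $S$ the two rows of \eqref{localglobal} are dual to one another with $\sigma_1$ and $\sigma_2$ transposes of each other; hence surjectivity of $\sigma_1$ already forces injectivity of $\sigma_2$. This is presumably what the paper has in mind, and it sidesteps the purity issue entirely. Either way your proof stands.
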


\begin{proof}
The two following short exact sequences (in \eqref{localglobal}) are given by 
the local-to-global spectral sequence for Ext, and the homomorphism between these
exact sequences is given by the section $s\,\in\, H^0(S, \Hom(K_S,\, \SO_S))$:
\begin{equation}\label{localglobal}
\xymatrix{
0 \ar[r] & {H^1(Hom(E,\,E\otimes K_S))} \ar[r] \ar[d]^{\sigma_1}&
{\Ext^1(E,\,E\otimes K_S)} \ar[r]\ar[d]^{\sigma} &
{H^0(Ext^1(E,\,E\otimes K_S))} \ar[r]\ar[d]^{\sigma_2} & {0} \\
0 \ar[r] & {H^1(Hom(E,\,E))} \ar[r]^{i_{E,E}} &
{\Ext^1(E,\,E)} \ar[r] &
{H^0(Ext^1(E,\,E))} \ar[r] & {0}
}
\end{equation}
We note that the above sequences are exact on the right because the next term is
$H^2(Hom(E,\,E))$, and $H^2(Hom(E,\,E))$ vanishes since the support of $E$ is
one-dimensional. 
The long exact sequence of cohomologies associated to the short exact sequence of sheaves
$$
0 \,\too\, Hom(E,E\otimes K_S) \,\too\, Hom(E,E) \,\too\, Hom(E,E)|_D \,\too\, 0
$$
shows that $\sigma_2$ in \eqref{localglobal} is injective and $\sigma_1$ is surjective (here we use that $F(E)\bigcap D$ is zero-dimensional).
Therefore, the commutative diagram in \eqref{localglobal} has a factorization as
in the statement of the proposition.
\end{proof}

\section{Symplectic leaves}

Let $E$ be a pure sheaf corresponding to a point in the moduli
space $M\,=\,M_H(S,\SC,P)$. Recall from Section \ref{sec2} that
the $0$-th Fitting ideal sheaf
$${\rm Fitt}_0(E)\,\subset\, \SO_S$$
of $E$ defines a sub-scheme $F(E)\,\subset\, S$. The Hilbert polynomial
of $F(E)\,\subset\,S$ depends only on the Hilbert polynomial of $E$.
Let $Q$ denote the Hilbert
scheme of sub-schemes of $S$ with Hilbert polynomial that of $F(E)$.
The above construction defines a morphism
$$
\begin{array}{ccc}
M & {\stackrel{f}\too} & Q\\
E & {\longmapsto} & (F(E)\,\subset\, S).\\
\end{array}
$$
It may be mentioned that this is analogous to the Hitchin map for Higgs bundles. Recall that
a Higgs bundle on a curve is equivalent to a pure sheaf on the total
space of the cotangent bundle of the curve, and the sub-scheme defined
by the $0$-th Fitting ideal of
this pure sheaf is the spectral curve corresponding to the Higgs bundle.

Let $C\,=\,F(E)$ for some $H$-stable sheaf $E$.
We will assume that $C\cdot D\,\neq\, 0$. The obstruction space for smoothness, at the point $C$,
of the Hilbert scheme $Q$ defined above is
$$\Ext^1(\SO_S(-C),\,\SO_C)\,=\,H^1(S,\, \SO_C\otimes \SO_S(C))\, .$$
Note that $H^1(S,\, \SO_C\otimes \SO_S(C))$ is Serre dual to
$H^0(C,\,K_C\otimes i^* \SO_S(-C))\,=\,H^0(C,\, K_S\vert_C)$,
where $i\, :\, C\, \hookrightarrow\, S$ is the inclusion map.
Now $H^0(C,\, K_S|_C)$ vanishes for any effective
curve $C$ with $C\cdot D\,\neq\, 0$, because $K_S\,\cong\, \SO_S(-D)$. Therefore, we conclude
that the point $C$ of the Hilbert scheme $Q$ is smooth.

Let $M'_0\,\subset\, M$ be the open subset corresponding to sheaves $E$
such that $F(E)$ is a smooth curve $C$ whose intersection with
$D$ is zero-dimensional. 
In other words, $M'_0$ is the
pre-image, under $f$, of the open subset $Q_0$ of the Hilbert scheme $Q$
corresponding to smooth curves $C$ whose intersection with
$D$ is zero-dimensional.
Alternatively, $M'_0$ is the open subset corresponding to sheaves of
the form $$E\,=\,i_*L\, ,$$ where $i\,:\,C\,\inj\, S$ is a smooth
curve intersecting $D$ in a zero-dimensional sub-scheme, and $L$ is a
line bundle on $C$.

Consider the composition
$$
\varphi_0\,:\,M'_0 \,\stackrel{f_0}{\too}\, Q_0 \,\stackrel{g}{\too}\, \Sym^{d'}(D)\, ,
$$
where $f_0\,=\,f|_{M'_0}$ is the restriction of $f$ to $M'_0$, and 
$g$ sends a curve $C$ to the sub-scheme $Z\,=\,C\bigcap D$ of $D$;
the sub-scheme $Z$ is nonzero because we are assuming $C\cdot D\,\neq\, 0$.

We will prove that the fibers of $\varphi_0$ are symplectic leaves of the Poisson structure of
$M'_0$, meaning for each point $E$ of $M'_0$, the tangent space $T_E\varphi_0^{}$ to the fiber of
$\varphi_0$ passing through $E$ is the symplectic subspace of $T_EM$.

The following result is analogous to \cite[Corollary 8.10]{Ma} and 
\cite[Theorem 4.7.5]{Bo1}.

\begin{theorem}\label{thm1}
The fibers of the morphism $\varphi_0$ are symplectic leaves of the
Poisson structure of $M'_0$.
\end{theorem}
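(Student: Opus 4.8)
The plan is to show that at each point $E \in M'_0$ the tangent space to the fiber of $\varphi_0$ through $E$ coincides with the symplectic subspace $T_0 \subset T_E M = \Ext^1(E,E)$ described in the Proposition of Section 5. Since a symplectic leaf is characterized (infinitesimally) by the condition that its tangent space equals the image of $\sigma$, it suffices to prove the equality $T_E(\varphi_0^{-1}\varphi_0(E)) = T_0$, and then invoke the integrability of the Poisson structure (the vanishing of $[\sigma,\sigma]$, due to Bottacin) to conclude that this integrable distribution integrates to the symplectic leaves, which must therefore be the (connected components of the) fibers of $\varphi_0$.

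The first step is to identify the derivative of $\varphi_0$ explicitly. Write $E = i_*L$ with $i : C \hookrightarrow S$ a smooth curve meeting $D$ in the zero-dimensional scheme $Z = C \cap D$. The map $\varphi_0$ factors as $M'_0 \xrightarrow{f_0} Q_0 \xrightarrow{g} \Sym^{d'}(D)$. The differential of $f_0$ at $E$ is the composite $\Ext^1(E,E) \to H^0(\mathcal{E}xt^1(E,E)) = H^0(C, N_{C/S})$ — the bottom row of the big diagram \eqref{bigdiag} — landing in the tangent space $H^0(C, N_{C/S})$ to $Q$ at $C$ (smoothness of $Q$ at $C$ was established just before the theorem). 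The differential of $g$ at $C$ is the restriction map $H^0(C, N_{C/S}) \to H^0(Z, N_{C/S}|_Z)$, i.e. evaluation of a normal vector field along $Z$; its kernel is $H^0(C, N_{C/S}(-Z)) = H^0(C, N_{C/S}\otimes \mathcal{O}_S(-D)|_C) = H^0(C, N_{C/S}\otimes K_S|_C)$. Via the isomorphism $\mathcal{E}xt^1(E, E\otimes K_S) \cong \mathcal{E}xt^1(E,E)\otimes K_S|_C = N_{C/S}\otimes K_S|_C$, this kernel is exactly $H^0(\mathcal{E}xt^1(E, E\otimes K_S))$, which is the image of $\sigma_2$ in \eqref{localglobal} and \eqref{bigdiag}. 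Hence $T_E\varphi_0 = \ker(d\varphi_0)$ is precisely the preimage, under $\Ext^1(E,E) \to H^0(\mathcal{E}xt^1(E,E))$, of the image of $\sigma_2$.

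Now compare this with $T_0$. Chasing the big diagram \eqref{bigdiag}: the middle column shows $T_0 \subset \Ext^1(E,E)$ is the image of $\sigma$, which is an extension of $\im(\sigma_2) = H^0(\mathcal{E}xt^1(E,E\otimes K_S))$ by $H^1(\mathcal{H}om(E,E))$; since $H^1(\mathcal{H}om(E,E)) = i_{E,E}(H^1(\mathcal{H}om(E,E)))$ is contained in $T_0$ (it is the image of the surjection $\sigma_1$ composed with the inclusion of the bottom row) and maps to $0$ in $H^0(\mathcal{E}xt^1(E,E))$, we get that $T_0$ is exactly the preimage in $\Ext^1(E,E)$ of $\im(\sigma_2) \subset H^0(\mathcal{E}xt^1(E,E))$. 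This matches the description of $T_E\varphi_0$ from the previous step, giving $T_E\varphi_0 = T_0$. Finally, because $\sigma$ is a genuine Poisson structure, the distribution $E \mapsto T_0 = \sigma(T_E^* M)$ is integrable, its integral leaves carry the symplectic form ${\overline\sigma}^{-1}$, and they must be open subsets of the fibers of any map whose fiberwise tangent spaces equal the distribution; conversely the fibers of $\varphi_0$, being cut out by the constancy of $\varphi_0$ whose differential has kernel exactly this distribution, are unions of leaves. Hence the fibers of $\varphi_0$ are the symplectic leaves.

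The main obstacle I anticipate is the precise identification of $d(g\circ f_0)$ with the local-to-global truncation map — in particular checking that the Kodaira–Spencer map $\Ext^1(E,E) \to H^0(C,N_{C/S})$ really is the $f$-derivative (this is the "Hitchin map" analogy flagged in the text, and requires knowing $F(i_*L)=C$ with the expected deformation theory), and that $\ker(dg) = H^0(N_{C/S}(-Z))$ under the identification $Z = C\cap D$, $\mathcal{O}_S(-D) = K_S$. Once these identifications are in place, the diagram chase in \eqref{bigdiag} is formal.
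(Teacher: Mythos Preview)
Your proposal is correct and follows essentially the same route as the paper: identify the tangent sequence of the fibration $f_0$ with the bottom row of \eqref{bigdiag} via $\mathcal{E}xt^1(E,E)\cong N_{C/S}$, identify $\ker(dg)=H^0(N_{C/S}(-D))$ with $H^0(\mathcal{E}xt^1(E,E\otimes K_S))=\im(\sigma_2)$, and read off $T_E\varphi_0=T_0$ from the diagram. The only extra content you supply is the explicit remark on why pointwise equality of tangent spaces with $\im(\sigma)$ forces the fibers to be the symplectic leaves, which the paper leaves implicit.
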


\begin{proof}
What we need to prove is that the tangent vector subspace 
$T_E\varphi_0^{} \,\subset\, T_E{M'_0}$ to
the fiber of $\varphi_0^{}$ at $E$ coincides with the 
subspace $T_0\,\subset \,\Ext^1(E,\,E)$ in \eqref{bigdiag}, using
the canonical identification $T_E{M'_0}\,=\,\Ext^1(E,\,E)$. The idea of the
proof is to show the following:
\begin{itemize}
\item the bottom row of the diagram in \eqref{bigdiag} is
identified canonically with the short exact sequence for the tangent
spaces given by the morphism $f_0$, and
\item the vertical inclusion on the right
column of the diagram in \eqref{bigdiag} is identified with the inclusion of tangent space to
the fiber of $g$.
\end{itemize}

The fiber of $f_0$ over a point of $Q$ corresponding to a (smooth)
curve 
$C\subset S$ is the Picard variety $\Pic^{d}(C)$ 
of line bundle on $C$ of degree $d$.
The embedding of this fiber on $M'_0$ is given by the morphism
$$
\begin{array}{ccc}
\Pic^{d}(C) & \stackrel{\psi_C}{\too} & M'_0 \\
L & \longmapsto & E=i_* L \\
\end{array}
$$
The differential of this morphism $\psi_C$ 
at a point corresponding to a line
bundle $L\, \in\, \Pic^{d}(C)$ is
$$
d\psi_C\,:\, H^1(C,\,\SO_C)\,=\,T_E{f_0} \,\too\, T_E{M'_0}\,=\,\Ext^1(E,\,E)\, .
$$
Since $f_0$ is a smooth morphism 
(note that $E$ is a smooth point of $M_H(S,P)$ and
the fiber of $f$ containing $E$ is the Picard variety $\Pic^{d}(C)$), 
we have a short exact sequence
relating the tangent bundles of $M'_0$ and $Q_0$
$$
0 \,\too\, T{f_0} \,\stackrel{d f_0}{\too}\, T{M'_0} \,\too\, f^*_0T{Q_0} \,\too\, 0
$$
which gives a short
exact sequence of the fibers
\begin{equation}\label{diff}
0 \,\too\, T_E{f_0} \,\stackrel{d f_0|_E}{\too}\, T_E{M'_0} \,\too\,T_{C}{Q_0} \,\too \,0\, .
\end{equation}
Using the following canonical isomorphisms
$$
H^1(C,\,\SO_C)\,=\, H^1(C,\,End(L,\,L)) \,=\, H^1(S,\,i_*(End(L,\,L))) \,=\, H^1(S,\,End(E,\,E))
$$
the differential $d f_0|_E$ is identified with the first map in the short
exact sequence coming from the local-to-global spectral sequence for
Ext, that is $d f_0|_E\,=\,i_{E,E}$ in \eqref{localglobal}. 
Therefore, the sequence in \eqref{diff} is
identified with the
local-to-global exact sequence (bottom row of diagram \eqref{bigdiag}).

The quotients of both sequences are
identified by the canonical isomorphisms
$$
T_CQ_0\,=\,\Hom(\SO_S(-C),\,\SO_C) \,=\, H^0(\SO_C(C)) \,=\, H^0(N_{C/S})\, ,
$$
where $N_{C/S}$ is the normal bundle of $C$ in $S$, and
$$
Ext^1(E,\,E)\,=\,Ext^1(i_*L,\,i_*L)\,=\,Ext^1(\SO_C,\,i_*(L^*\otimes L))\,=\, 
Ext^1(\SO_C,\,\SO_C) \,=\, N_{C/S}\, ,
$$
so that 
$$
H^0(S,\,Ext^1(E,\,E)) \,=\, H^0(N_{C/S})\, .
$$
The term at the top right of \eqref{bigdiag} is canonically identified as
$$
H^0(S,\, Ext^1(E,\,E\otimes K_S))\,=\,H^0(S,\, Ext^1(E,\,E)\otimes \SO_S(-D))
$$
$$
=\, H^0(S,\, N_{C/S}\otimes \SO_S(-D))\,=\,H^0(N_{C/S}(-D))\, .
$$
This is the subspace of $H^0(N_{C/S})\,=\,T_CQ_0$ corresponding to deformations
of $C$ which fix the intersection $C\cap D \,=:\,Z$, i.e., the tangent
space $T_Cg$ to the fiber of $g$ containing $C$.
Therefore, the commutative diagram
$$
\xymatrix{
0 \ar[r] & 
{T_E{f_0}} \ar[r] \ar@{=}[d]&
{T_E\varphi_0^{}} \ar[r]\ar@{^{(}->}[d] &
{T_Cg} \ar[r]\ar@{^{(}->}[d] & {0} \\
0 \ar[r] & 
{T_E{f_0}} \ar[r]^{d\psi_C} &
{T_E{M'_0}} \ar[r] &
{T_{C}{Q_0}} \ar[r] & {0}
}
$$
is identified with the bottom two rows of the diagram in \eqref{bigdiag}, and then
$T_E\varphi_0^{}\,=\,T_0$.
\end{proof}

\section*{Acknowledgements}

We thank N. Fakhruddin and O. Villamayor for useful discussions.
We thank the International Center for Theoretical Sciences at Bangalore
for hospitality while this work was being completed. The first author
is partially supported by a J. C. Bose Fellowship. The second
author is partially supported by the European Union
(Marie Curie IRSES Fellowship within the 7th Framework Programme
under agreement 612534 MODULI), and
Ministerio de Ciencia, Innovaci\'on y Universidades
(grants MTM2016-79400-P and
ICMAT Severo Ochoa project SEV-2015-0554).

\end{document}